\numberwithin{equation}{section}
\newtheorem{theorem}{Theorem}[section]
\newtheorem{lemma}[theorem]{Lemma}
\theoremstyle{definition}
\theoremstyle{remark}
\newtheorem{remark}[theorem]{Remark}
\newcommand{\eps}{\varepsilon}
\newcommand{\dive}{{\ensuremath\mathop{\mathrm{div}\,}}}
\newcommand{\bld}[1]{\boldsymbol{#1}}
\newcommand{\bv}{\bld{v}}
\newcommand{\bw}{\bld{w}}
\newcommand{\bn}{\bld{n}}
\newcommand{\bu}{\bld{u}}
\newcommand{\tbu}{\bld{\tilde{u}}}
\newcommand{\tsig}{\tilde{\sigma}_f}
\newcommand{\bfeta}{{\bm \eta}}
\newcommand{\dbfeta}{\dot{\bm \eta}}
\newcommand{\UU}{\mathcal{U}}
\newcommand{\eu}{\bld{e_{u}}}
\newcommand{\ef}{e_{f}}
\newcommand{\es}{e_{s}}
\newcommand{\eeta}{\bld{e}_{\eta}}
\newcommand{\edeta}{\dot{\bld{e}}_{\eta}}
\newcommand{\teu}{\tilde{\bld{e}}_{u}}
\newcommand{\tef}{\tilde{e}_{f}}
\title[Robin-Robin Coupling]{Stability and error analysis of a splitting method using Robin-Robin coupling applied to a fluid-structure interaction problem}
\author{Erik Burman, Rebecca Durst, Johnny Guzm\'an}
\begin{document}
\maketitle
\begin{abstract}
We analyze a splitting method for a canonical fluid structure interaction problem. The splittling method uses a Robin-Robin boundary condition, explicit strategy. We prove the method is stable and, furthermore, we provide an error estimate that shows the error at the final time $T$ is $O(\sqrt{T \Delta t})$ where $\Delta t$ is the time step.
\end{abstract}


\section{Introduction}
In this work we are interested in the stability analysis of a loosely
coupled scheme for the approximation of the interaction of a viscous
fluid and an elastic solid. In a loosely coupled (or explicit) scheme the two systems are
solved separately in a staggered manner, passing interface data from one system to
the other between the solves.
It is well known that loosely coupled schemes for fluid structure
interaction have severe stability problems in situations where the
density ratio between the two phases is close to one. This is due to what is known as the added mass effect \cite{causin2005added}. There
has been intense research on approaches that allow for a partial or even
complete decoupling of the two systems without loss of stability,
however very few fully decoupled approaches have been developed with a
satisfactory theoretical foundation. 

A
first step in the direction of decoupling the two systems is the semi-implicit copling schemes \cite{Fernandezetal2007,QuainiQuarteroni2007,BadiaQuainiQuarteroni2008b,BukacCanicGlowMuhaQuaini2014},
where the implicit part of the coupling, typically the elasticity
system and the pressure velocity coupling in the fluid, guarantees
stability, and the explicit step (transport in the fluid) reduces the
computational cost. Such splitting methods nevertheless retain an
implicit part of the same size as the original problem, although the
equations are simplified. Fully explicit coupling was first achieved
by Burman and Fern\`andez \cite{burman2009stabilization} using a formulation based on Nitsche's method,
drawing on an earlier, fully implicit formulation by Hansbo et al.
\cite{Hansbo2005}. Stability was achieved by the addition of a
pressure stabilization that relaxed incompressibility in the vicinity
of the interface. Although the proposed scheme was proved to be
stable it suffered from a strong consistency error of order
$O(\tau/h)$ where $\tau$ and $h$ are the time and space discretization
parameters, respectively. The source of this error was the penalty term
of the Nitsche formulation. This led to the need for very small time
steps combined with iterative corrections, for the method to yield sufficiently accurate approximations.
In a further development Burman and Fern\`andez compared the Nitsche
based method with a closely related method using a Robin type splitting
procedure \cite{burman2014explicit}. Robin type domain decomposition
had already been applied for the preconditionning of monolithic fluid
structure interaction problems by Badia et
al. \cite{badia2008fluid}. The loosely coupled scheme based on
Robin type coupling of \cite{burman2014explicit} was proved to be stable, but only with
the addition of the stabilization term on the pressure at the interface and using a weight in the Robin condition scaling similarly as
the penalty term in the Nitsche method. It was however observed numerically that the Robin-Robin coupling method was stable
also without such a pressure stabilizing term. 

It is the objective of the
present paper to revisit the analysis of the Robin-Robin method
without any additional stabilization (what the authors of \cite{burman2014explicit}
referred to as the genuine Robin-Robin method) and prove stability and
error estimates for this method. To make the results
cleaner and more transparent we do not discretize in space. Instead, our splitting scheme solves a fluid and a solid PDE on each time step. Assuming enough regularity of the local time PDEs, we give a rigorous error
analysis that shows the error in a certain energy norm decreases as $O(\sqrt{T \Delta t})$ for
sufficiently smooth solutions, with a parameter $\lambda$ of the Robin condition chosen
$O(1)$. This leads to convergence of the time discrete approximation
independently of the space discretization. This was not the case in
\cite{burman2014explicit}, where as mentioned above the convergence was hampered by the
$h^{-1}$-scaling of the Robin parameter, imposing a very small time
step and iterative correction steps to achieve sufficient
accuracy. Observe that it is likely that the accuracy of the approach
suggested here can be improved using correction steps for moderate
values of the time step, thanks to the absence of the $h^{-1}$ scaling
in the estimate.  We would like to highlight that our estimates grow like $\sqrt{T}$ instead of an exponential growth. We accomplish this by using a technique used by G. Baker \cite{baker1976error}.

Finally we should mention recent papers for the simpler case of interaction between a fluid
and a thin structure that also have rigorous convergence
analysis \cite{Fernandez2013,FernandezLandjuelaVidrascu2015,bukac2016stability}. For the case of thick
solids the paper of \cite{FernandezMullaert2016} seems to be the
first paper with a rigorous error analysis of a thick wall
structure. The method considered in \cite{FernandezMullaert2016} is a Robin-Neumann coupling
that first appeared in \cite{fernandez2015generalized}. There stability
is achieved by handling the inertial effects of the solid in an
implicit coupling with the fluid. This is then combined with
extrapolation to reduce the splitting error. The leading error in
that method for this approach is $O(\Delta t/\sqrt{h})$ which scales like our
error estimates if $\Delta t = O(h)$. It should be mentioned that the constant of their estimates grow exponentially with $T$.

The outline of the present paper is as follows. In section 2 we
introduce the linear model problem. The proposed Robin-Robin loosely
coupled scheme is introduced in section 3 and the stability is
analyzed in section 4. Finally in section 5 we derive the truncation
error of the splitting and use this result together with the stability
estimate to prove the error estimate.

\section{The Model Problem}
\begin{figure}[h]
\includegraphics[width=0.75\linewidth]{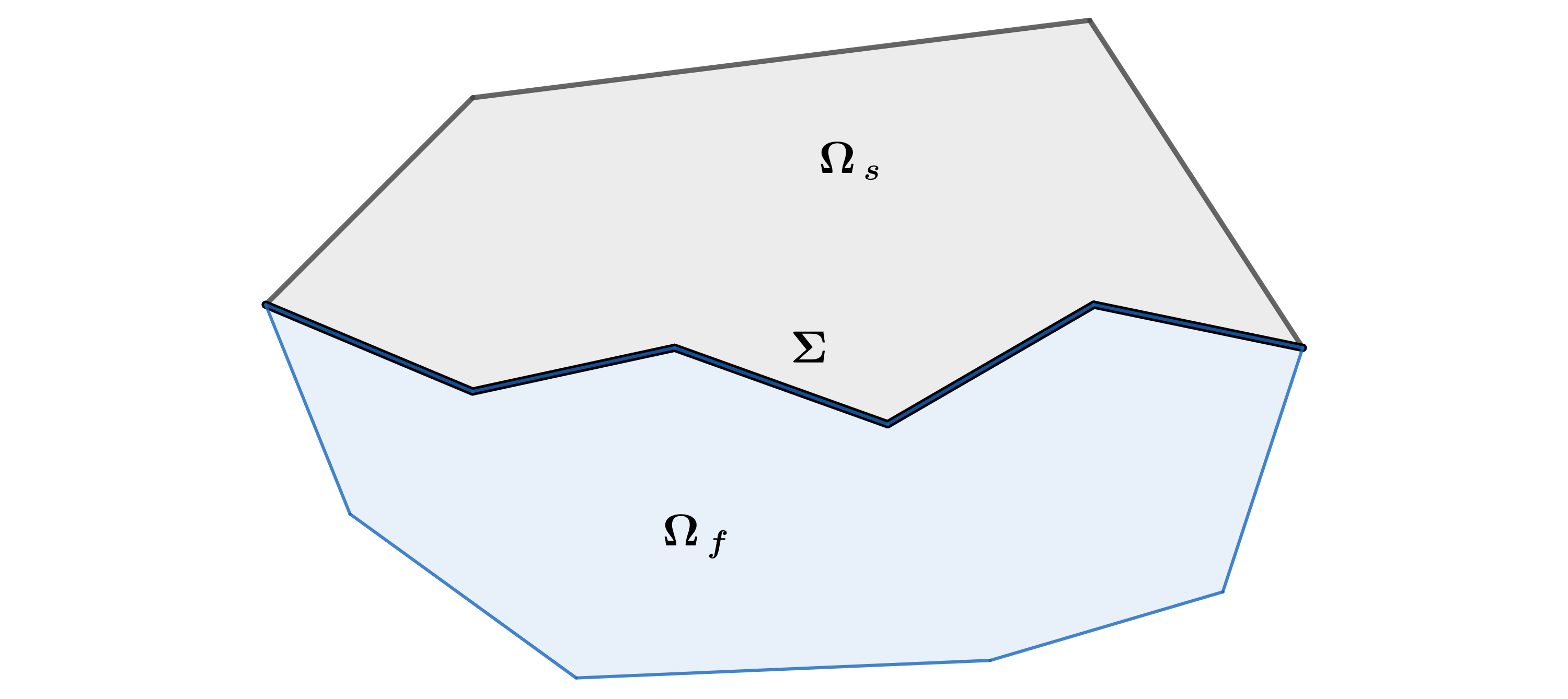}
\caption{An example of the domains $\Omega_s$ and $\Omega_f$ with interface $\Sigma$.\label{fig:fig1}}
\end{figure}
Let $\Omega_s$ and $\Omega_f$ be two domains with a matching interface $\Sigma=\partial \Omega_s \cap \Omega_f$. We also set $\Sigma_i=\partial \Omega_i \backslash \Sigma$ for $i=s,f$. Our fluid is modeled by the Stokes equation on the fluid domain $\Omega_f$.
\begin{equation}\label{Fluid}
\left\{
\begin{alignedat}{2}
\rho_{f} \partial_{t} \bld{\UU}-  \dive \sigma_{\mathcal{F}}  =&   0,   \quad &&  {\rm{in}} \ \Omega_{f} \times (0,T), \\
\dive \bld{\UU}  =&  0,  \quad &&  {\rm{in}} \ \Omega_{f} \times (0,T), \\
\bld{\UU} = &  0,   \quad && {\rm{on}}\ \Sigma_f \times (0,T).
\end{alignedat}
\right.
\end{equation}

Our structure is modeled by the classical linear elasticity equations on the structure domain $\Omega_s$.
\begin{equation}\label{Structure}
\left\{
\begin{alignedat}{2}
\rho_{s} \partial_{tt} \bld{\mathcal{E}}-  \dive \sigma_{\mathcal{S}} = &  0, \quad &&  \ {\rm{in}} \ \Omega_{s} \times (0,T), \\
\bfeta  = &  0,  \quad && {\rm{on}} \ \Sigma_{s}   \times (0,T).
\end{alignedat}
\right.
\end{equation}
Here, $\bld{\mathcal{U}}$ is the velocity of the fluid, $\mathcal{P}$ is the pressure of the fluid, and $\bld{\mathcal{E}}$ is the displacement of the structure. The constants $\rho_f , \rho_s $ are the fluid and solid densities, and $\bn$ and $\bn_s$ represent the outward-facing normal of the fluid and solid domains, respectively. Furthermore, $\sigma_{\mathcal{F}}, \sigma_{\mathcal{S}}$ denote the fluid and solid stress tensors, respectively, and are given by
\begin{alignat*}{1}
\sigma_{\mathcal{F}}= & 2 \mu {\eps} (\bld{\UU}) - \mathcal{P} {\bf I}, \\
\sigma_{\mathcal{S}} = & 2 L_{1} {\eps}(\bld{\mathcal{E}}) + L_{2} (\dive \bld{\mathcal{E}}) {\bf I},
\end{alignat*}
where  ${\eps}$ denotes the symmetric gradient, $\mu$ the viscosity coefficient and $L_1 > 0$, $L_2 \ge 0$ the Lam\'e constants. Then the two problems are coupled via the following interface conditions:
\begin{subequations}\label{Exactinterface}
\begin{alignat}{2}
\bld{\UU} =& \partial_t \bld{\mathcal{E}}  \quad &&{\rm{on}} \  \Sigma, \label{Exactinterface1}\\
\sigma_{\mathcal{S}}\bn_{s} \ + \ \sigma_{\mathcal{F}} \bn  =&  0  \quad &&   {\rm{on}} \ \Sigma. \label{Exactinterface2}
\end{alignat}
\end{subequations}


\section{Splitting Method}

In \cite{burman2014explicit}, several splitting methods were given for the following FSI problem. We will consider one such method. In order to describe it, we consider a uniform grid for the interval $[0,T]$, with step size $\Delta t$.  We assume that there is an integer $N$ so that $N \Delta t=T$ and we let $t_n=\Delta t\, n$. The splitting method sequentially solves the following two sub-problems. The first is the solid problem:

Find $\bfeta^{n+1} $ and $\dbfeta^{n+1}$ such that 
\begin{subequations}\label{solid}
\begin{alignat}{2}
\rho_{s} \partial_{t} {\dbfeta^{n+1}} -  \dive \sigma_{s}^{n+1} = &  0 \quad &&  \ {\rm{in}} \ \Omega_{s} \times [ t_n, t_{n+1}],  \label{solid1} \\
\dbfeta^{n+1}=& \partial_t \bfeta^{n+1}  \quad && \  {{\rm{in}}} \  \Omega_{s} \times [t_{n}, t_{n+1}]   \label{solid2} \\
\sigma_{s}^{n+1}=&  2 L_{1} {\eps}(\bfeta^{n+1}) + L_{2} (\dive \bfeta^{n+1}) {\bf I} \quad && \ {\rm{in}} \ \Omega_{s} \times  [t_n, t_{n+1}]  \label{solid3} \\
\bfeta^{n+1}  = &  0  \quad && \ {\rm{on}} \  \Sigma_s  \times  [t_n, t_{n+1}],  \label{solid4} \\
\lambda \dbfeta^{n+1} + \sigma_{s}^{n+1} \bn_{s}  =&  \lambda \tbu^{n}  -  \tsig^n \bn  \quad &&  \ {\rm{on}} \ \Sigma \times  [t_n, t_{n+1}], \label{solid6}\\ 
\bfeta^{n+1}(\cdot, t_n)= \bfeta^n(\cdot, t_n) , \quad \dbfeta^{n+1}(\cdot, t_n)=& \dbfeta^n(\cdot, t_n)  \quad &&  \ {\rm{on}} \ \Omega_s. \label{solid7} 
\end{alignat}
\end{subequations}
We set, of course, $\bfeta^0(\cdot, t_0)= \bld{\mathcal{E}}(\cdot, t_0)$, $\dbfeta^0(\cdot, t_0)=  \partial_t \bld{\mathcal{E}}(\cdot, t_0)$.  Below we will also set  
$\bu^0(\cdot, t_0)= \bld{\UU}(\cdot, t_0)$. Here  for $n \ge 1$ we set
\begin{equation*}
\tbu^n(x)= \frac{1}{\Delta t}\int_{t_{n-1}}^{t_n} \bu^n(x,s) ds,  \quad \tsig^n(x)=\frac{1}{\Delta t} \int_{t_{n-1}}^{t_n} \sigma_f^n(x,s) ds,
\end{equation*}
and for $n=0$ we set
\begin{equation*}
\tbu^0(x)= \bu^0(x,t_0)  \quad \tsig^0(x)=\sigma_{\mathcal{F}}(x,t_0).
\end{equation*}
Note that $\mathcal{P}(x, t_0)$ is not data and hence, we technically do not know $\sigma_{\mathcal{F}}(x,t_0)$. However, we assume that we have a good approximation of $\mathcal{P}(x, t_0)$. In fact, for simplicity, we will assume that we know $\mathcal{P}(x, t_0)$ exactly. The fluid sub-problems is given in the following:

Find $\bu^{n+1}$ and $p^{n+1}$ such that 
\begin{subequations}\label{fluid}
\begin{alignat}{2}
\rho_{f} \partial_{t} \bu^{n+1}  -  \dive \sigma_{f}^{n+1}  =&   0,   \quad && \  {\rm{in}} \ \Omega_{f} \times [t_n, t_{n+1}] , \label{fluid1} \\
\sigma_{f}^{n+1}= & 2 \mu {\eps} (\bu^{n+1}) - p^{n+1}  {\bf I}, \quad && \ {\rm{in}} \ \Omega_{f} \times [t_n, t_{n+1}],  \label{fluid2} \\ 
\dive \bu^{n+1}  =&  0,  \quad && \ {\rm{in}} \ \Omega_{f} \times [t_n, t_{n+1}],  \label{fluid3} \\
\bu^{n+1} = &  0   \quad && \ {\rm{on}} \  \Sigma_f  \times [t_n, t_{n+1}],  \label{fluid4} \\
\lambda \bu^{n+1} +\sigma_{f}^{n+1} \bn =&  \lambda \dbfeta^{n+1} \ + \tsig^n \bn \quad && \ {\rm{on}} \ \Sigma \times [t_n, t_{n+1}],  \label{fluid6} \\
\bu^{n+1}(\cdot, t_n)=& \bu^n(\cdot, t_n)  \quad &&\ \text{ on } \Omega_f. \label{fluid7}
\end{alignat}
\end{subequations}
We require $\lambda>0$. This strict positivity determines the balancing of the two interface coupling conditions (\ref{Exactinterface1}) and (\ref{Exactinterface2}). A large value of $\lambda$ will emphasize the continuity of velocities and a small value that of stresses. 

Before proceeding, we define the space-time norm on $X$, a Hilbert space
\begin{equation*}
\|v\|_{L^{2}(r_1 ,r_2 ;X)}^2 := \int_{r_1}^{r_2} \|v(s)\|_{X}^{2}ds.
\end{equation*}

\begin{remark}\label{remark}
We will assume that the \eqref{fluid} and \eqref{solid} are well posed and  have enough regularity so that  $\sigma_f^{n+1} \bn \in L^2(t_n, t_{n+1}; \Sigma)$,  $\sigma_s^{n+1} \bn \in L^2(t_n, t_{n+1}; \Sigma)$, $\bu^{n+1} \in L^2(t_n, t_{n+1}; \Sigma)$, $\dbfeta^{n+1} \in L^2(t_n, t_{n+1}; \Sigma)$.  We note, for example,  $\sigma_s^{n+1} \bn \in L^2(t_n, t_{n+1}; \Sigma)$ provided $\bfeta^{n+1} \in L^2(t_n, t_{n+1}; H^{3/2}(\Omega_s))$. We have not been able to find exactly these regularity results in literature nor have we been able to prove them, however, we have found some encouraging results in the literature (see for example \cite{taira2013mixed}, \cite{hayashida1971mixed}). 
\end{remark}

Throughout this paper we will assume that the regularity mentioned in the above remark holds.


\section{Stability Analysis}\label{sec:stability}
We will now prove stability of the splitting method introduced in the last section.  We start with a few preliminary results. The following identity easily follows:
\begin{equation}\label{eq131}
\int_{\Sigma} (\bld{v}-\bld{w}) \cdot \bld{\psi}= \frac{1}{2} \left(\|\bld{v}\|_{L^2(\Sigma)}^2-\|\bld{w}\|_{L^2(\Sigma)}^2+   \|\bld{\psi}-\bld{w}\|_{L^2(\Sigma)}^2-\|\bld{\psi}-\bld{v}\|_{L^2(\Sigma)}^2\right).
\end{equation}
Additionally, if we set $\tilde{\bld{w}}(x)= \frac{1}{\Delta t} \int_{t_{n-1}}^{t_n} \bld{w}(x,s) ds$ then we see that
\begin{alignat}{1}
\nonumber \int_{t_{n}}^{t_{n+1}} \|\tilde{\bld{w}} \|_{L^2(\Sigma)}^2= &\Delta t \int_{\Sigma} (\tilde{\bld{w}}(x))^2 =   \frac{1}{\Delta t} \int_{\Sigma} (\int_{t_{n-1}}^{t_n} \bld{w}(x,s) ds)^2 \\
 \le &   \int_{t_{n-1}}^{t_n} \int_{\Sigma} (\bld{w}(x,s))^2 \, ds   =  \int_{t_{n-1}}^{t_n}  \|\bld{w}(s)\|_{L^2(\Sigma)}^2 ds. \label{eq111}
\end{alignat}

For the analysis that follows, we define the bilinear form $a_{s}(\bw,\bv)$ to be
\begin{equation*}
a_s(\bw, \bv):=2L_1(\eps(\bw), \eps(\bv))_s+ L_2 (\dive \bw, \dive \bv)_s.
\end{equation*}
It induces the norm:
\begin{equation*}
\|\bw\|_S^2:= 2 L_1 \|\eps(\bw)\|_{L^2 (\Omega_s)}^2 + L_2 \|\dive \bw\|_{L^2 (\Omega_s)}^2.
\end{equation*}
Finally, the following quantities willl allow us to state our stability estimates

\begin{alignat*}{2}
\mathbf{E}^{n} \ &:= \ \frac{\rho_{f}}{2} \|\bu^n(t_n)\|^{2}_{L^{2}(\Omega_{f})}  + \frac{\rho_{s}}{2}\|\dbfeta^n(t_n)\|^{2}_{L^{2}(\Omega_{s})}+ \frac{1}{2} \|\bfeta^n(t_n)\|_S^2,  \\
 \mathbf{T}^{n} \ &:= \ 2 \mu \int_{t_{n-1}}^{t_{n}} \| {\eps}(\bu^{n})(s) \|^{2}_{L^{2}(\Omega_{f})}ds + \frac{\lambda}{2} \int_{t_{n-1}}^{t_n}  \|(\dbfeta^{n}-\tbu^{n-1})(s)\|_{L^2(\Sigma)}^2 ds,  \\
\mathbf{S}^{n} \ &:= \ \frac{1}{2\lambda}\int_{t_{n-1}}^{t_{n}} \|\sigma_f^n(s)\bn\|^{2}_{L^{2}(\Sigma)}ds + \frac{\lambda}{2}\int_{t_{n-1}}^{t_{n}}\|\bu^{n}(s)\|^{2}_{L^{2}(\Sigma)}ds,  \qquad \text{ for } n \ge 1, \\
\mathbf{S}^{0}:&= \frac{\Delta t}{2\lambda} \|\sigma_{\mathcal{F}}(t_0)\bn\|^{2}_{L^{2}(\Sigma)}  \ + \ \frac{\lambda \Delta t }{2} \|\bld{\mathcal{U}}(t_0)\|^{2}_{L^{2}(\Sigma)}.
\end{alignat*}

The stability result is given in the following theorem. 
\begin{theorem}
Let $\lambda>0$ and suppose that $\bfeta^{n+1}$ solves \eqref{solid} and $\bu^{n+1}, p^{n+1}$ solve \eqref{fluid} for $0 \le n \le N-1$. Then we have   
\begin{alignat*}{1}
\mathbf{E}^{N}+ \sum_{n=1}^N   \mathbf{T}^{n}+ \mathbf{S}^{N} \le  \mathbf{E}^{0}+ \mathbf{S}^{0}.
\end{alignat*}
\end{theorem}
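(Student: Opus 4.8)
The plan is a discrete energy estimate: on each slab $[t_n,t_{n+1}]$ I test the solid equation with $\dbfeta^{n+1}$ and the fluid equation with $\bu^{n+1}$, integrate in time, combine the two interface contributions through the Robin conditions \eqref{solid6} and \eqref{fluid6}, and sum over $n$ so the boundary terms telescope. Testing \eqref{solid1} with $\dbfeta^{n+1}$ in $\Omega_s$, using $\dbfeta^{n+1}=\partial_t\bfeta^{n+1}$, the definition \eqref{solid3}, and integration by parts (the $\Sigma_s$ boundary term vanishes by \eqref{solid4}), gives
\[
\frac{\rho_s}{2}\frac{d}{dt}\|\dbfeta^{n+1}\|_{L^2(\Omega_s)}^2+\frac12\frac{d}{dt}\|\bfeta^{n+1}\|_S^2-\int_\Sigma(\sigma_s^{n+1}\bn_s)\cdot\dbfeta^{n+1}=0 .
\]
Testing \eqref{fluid1} with $\bu^{n+1}$ in $\Omega_f$, using \eqref{fluid2}, \eqref{fluid3} (so the pressure term disappears) and \eqref{fluid4}, gives
\[
\frac{\rho_f}{2}\frac{d}{dt}\|\bu^{n+1}\|_{L^2(\Omega_f)}^2+2\mu\|\eps(\bu^{n+1})\|_{L^2(\Omega_f)}^2-\int_\Sigma(\sigma_f^{n+1}\bn)\cdot\bu^{n+1}=0 .
\]
Integrating both over $[t_n,t_{n+1}]$, adding, and using the matching of data \eqref{solid7} and \eqref{fluid7} to identify the energy at $t_n$ with $\mathbf E^{n}$, I obtain
\[
\mathbf E^{n+1}-\mathbf E^{n}+2\mu\int_{t_n}^{t_{n+1}}\!\|\eps(\bu^{n+1})(s)\|_{L^2(\Omega_f)}^2\,ds+J_s+J_f=0 ,
\]
with $J_s=-\int_{t_n}^{t_{n+1}}\!\int_\Sigma(\sigma_s^{n+1}\bn_s)\cdot\dbfeta^{n+1}$ and $J_f=-\int_{t_n}^{t_{n+1}}\!\int_\Sigma(\sigma_f^{n+1}\bn)\cdot\bu^{n+1}$, which are well defined by Remark \ref{remark}.

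Next I would process $J_s+J_f$. Substituting \eqref{solid6} into $J_s$ and \eqref{fluid6} into $J_f$ and adding produces a combination of quadratic and bilinear terms in $\tbu^n$, $\tsig^n\bn$, $\bu^{n+1}$, $\dbfeta^{n+1}$. Regrouping with the algebraic identity \eqref{eq131} (i.e.\ repeatedly completing squares) and then using \eqref{fluid6} once more in the form $\lambda(\dbfeta^{n+1}-\bu^{n+1})=\sigma_f^{n+1}\bn-\tsig^n\bn$ to turn the remaining cross terms into full squares, I expect the exact identity
\[
J_s+J_f=\frac{\lambda}{2}\int_{t_n}^{t_{n+1}}\!\!\|(\dbfeta^{n+1}-\tbu^{n})(s)\|_{L^2(\Sigma)}^2\,ds+\mathbf S^{n+1}-\int_{t_n}^{t_{n+1}}\!\int_\Sigma\!\Bigl(\frac{\lambda}{2}|\tbu^{n}|^2+\frac{1}{2\lambda}|\tsig^{n}\bn|^2\Bigr) .
\]
The first summand together with the viscous term from the first step assembles $\mathbf T^{n+1}$.

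Finally I would close the estimate. Since $\tbu^n$ and $\tsig^n$ are constant in $t$ on $[t_n,t_{n+1}]$, the last integral equals $\Delta t\bigl(\frac{\lambda}{2}\|\tbu^n\|_{L^2(\Sigma)}^2+\frac{1}{2\lambda}\|\tsig^n\bn\|_{L^2(\Sigma)}^2\bigr)$, which is $\le\mathbf S^{n}$: for $n\ge1$ this is exactly \eqref{eq111} applied to $\bu^n$ and to $\sigma_f^n\bn$, and for $n=0$ it holds with equality by the definitions of $\tbu^0$, $\tsig^0$ and $\mathbf S^0$. Combining with the first step, the step-$n$ identity becomes $\mathbf E^{n+1}-\mathbf E^{n}+\mathbf T^{n+1}+\mathbf S^{n+1}-\mathbf S^{n}\le0$, and summing over $n=0,\dots,N-1$ the $\mathbf E$- and $\mathbf S$-terms telescope, yielding the claim.

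The main obstacle is the bookkeeping in the second step: the two Robin relations must be inserted in the correct order and all the cross terms grouped so that precisely $\mathbf S^{n+1}$ emerges with a favorable sign while the leftover lagged quantities are exactly what \eqref{eq111} controls by $\mathbf S^{n}$ — this is also where the interface dissipation $\frac{\lambda}{2}\|\dbfeta^{n+1}-\tbu^n\|_{L^2(\Sigma)}^2$ inside $\mathbf T^{n+1}$ is generated. A lesser subtlety is that the telescoping of the $\mathbf S$-terms is only an inequality, since reconstructing the integral over the previous slab from the average $\tbu^n$ costs a Cauchy--Schwarz (Jensen) step, and the case $n=0$ has to be handled on its own because $\tbu^0,\tsig^0$ are pointwise initial data rather than slab averages.
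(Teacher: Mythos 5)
Your proposal is correct and follows essentially the same route as the paper: test with $\dbfeta^{n+1}$ and $\bu^{n+1}$, substitute the two Robin conditions into the combined interface term, complete squares via \eqref{eq131} together with the identity $\frac{1}{\lambda}\|\tsig^n\bn-\sigma_f^{n+1}\bn\|_{L^2(\Sigma)}^2=\lambda\|\dbfeta^{n+1}-\bu^{n+1}\|_{L^2(\Sigma)}^2$, and close with the Jensen-type bound \eqref{eq111} before telescoping. The exact identity you ``expect'' for $J_s+J_f$ is precisely the one the paper derives, and your remarks about the $n=0$ case and the inequality (rather than equality) in the $\mathbf S$-telescoping match the paper's treatment.
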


\begin{proof}

We multiply the first equations of \eqref{solid} and \eqref{fluid} by $\dot{\bfeta}^{n+1}$ and $\bu^{n+1}$, respectively,  integrate, and add the results to get
\begin{alignat}{1}
 \frac{\rho_{f}}{2}\partial_{t} \|\bu^{n+1}\|^{2}_{L^{2}(\Omega_{f})} \ + \ 2 \mu \| {\eps}(\bu^{n+1}) \|^{2}_{L^{2}(\Omega_{f})} \ + \ \frac{\rho_{s}}{2} \partial_{t}\|\dot{\bfeta}^{n+1}\|^{2}_{L^{2}(\Omega_{s})} \ + \ \frac{1}{2}\partial_{t}    \|\bfeta^{n+1} \|_S^2 = J, \label{r43}
\end{alignat}
where
\begin{equation*}
J := \int_{\Sigma}\sigma_{f}^{n+1}\bn \cdot \bu^{n+1} \  + \ \int_{\Sigma}\sigma_{s}^{n+1} \bn_{s} \cdot \dot{\bfeta}^{n+1}.
\end{equation*}

We can then write
\begin{alignat*}{1}
  J=\int_{\Sigma}\sigma_f^{n+1}\bn \cdot (\bu^{n+1}-\dbfeta^{n+1}) \ + \ \int_{\Sigma}(\sigma_s^{n+1} \bn_{s} +\sigma_f^{n+1} \bn)  \cdot \dot{\bfeta}^{n+1}.
\end{alignat*}

From \eqref{solid6} and \eqref{fluid6} we get
\begin{alignat*}{1}
\sigma_s^{n+1} \bn_{s} +\sigma_f^{n+1} \bn= & \lambda (\tbu^n-\bu^{n+1}), \\
\bu^{n+1}-\dbfeta^{n+1}=& \frac{1}{\lambda}(\tsig^n\bn -\sigma_f^{n+1}\bn).
\end{alignat*}
Thus,
\begin{equation}
J\ = \  \frac{1}{\lambda} \int_{\Sigma}\sigma_f^{n+1}\bn \cdot (\tsig^n\bn -\sigma_f^{n+1}\bn)+ \ \lambda  \int_{\Sigma} (\tbu^n-\bu^{n+1}) \cdot \dot{\bfeta}^{n+1}.
\end{equation}
By the relation \eqref{eq131} and the fact that $\frac{1}{\lambda} \|\tsig^n\bn-\sigma_f^{n+1} \bn\|_{L^{2}(\Sigma)}^2= \lambda  \|\dbfeta^{n+1}-\bu^{n+1}\|_{L^2(\Sigma)}^2$,  we obtain
\begin{equation*}
J= \frac{\lambda}{2} \left(\|\tbu^{n}\|_{L^2(\Sigma)}^2-\|\bu^{n+1}\|_{L^2(\Sigma)}^2\right) + \frac{1}{2\lambda} \left(\|\tsig^n\bn\|_{L^{2}(\Sigma)}^2- \|\sigma_{f}^{n+1}\bn\|_{L^{2}(\Sigma)}^2\right) -\frac{\lambda}{2}\|\dbfeta^{n+1}-\tbu^n\|_{L^2(\Sigma)}^2.
\end{equation*}
If we plug this into \eqref{r43} we arrive at
\begin{align*}
\nonumber \frac{\rho_{f}}{2}\partial_{t} &\|\bu^{n+1}\|^{2}_{L^{2}(\Omega_{f})} \ + \ 2 \mu \| {\eps}(\bu^{n+1}) \|^{2}_{L^{2}(\Omega_{f})} \ + \ \frac{\rho_{s}}{2} \partial_{t}\|\dot{\bfeta}^{n+1}\|^{2}_{L^{2}(\Omega_{s})} \ + \ \frac{1}{2}\partial_{t}  \| \bfeta^{n+1} \|_S^2  \\ 
\ & + \  \frac{1}{2\lambda}\|\sigma_{f}^{n+1}\bn\|^{2}_{L^{2}(\Sigma)}  + \  \frac{\lambda}{2} \|\bu^{n+1}\|^{2}_{L^{2}(\Sigma)}  \ =  \frac{1}{2\lambda}\|\tsig^n\bn\|^{2}_{L^{2}(\Sigma)}  + \  \frac{\lambda}{2} \|\tbu^n\|^{2}_{L^{2}(\Sigma)}    -\frac{\lambda}{2}\|\dbfeta^{n+1}-\tbu^n\|_{L^2(\Sigma)}^2.
\end{align*}
After integrating on $[t_n, t_{n+1}]$  and using \eqref{eq111}, we obtain
\begin{alignat*}{1}
\mathbf{E}^{n+1}+ \mathbf{T}^{n+1} + \mathbf{S}^{n+1} \le \mathbf{E}^{n}+ \mathbf{S}^{n}.
\end{alignat*}
The result now follows after summing the above inequalities over all $n$ from $0$ to $N-1$. 
\end{proof}


\section{Error Estimates}\label{sec:error}

We now show that the splitting method with Robin-Robin type boundary conditions described above is, in fact, weakly consistent. In fact, we will prove that the error is $\sqrt{T\Delta t}$. Consider the solutions $\bld{\UU}, \mathcal{P}, \sigma_{\mathcal{F}}, \bld{ \mathcal{E}}, \sigma_{\mathcal{S}}$ of \eqref{Fluid}, \eqref{Structure} and \eqref{Exactinterface}. We use the notation $\bld{\UU}^{n+1}(t,x)=\bld{\UU}(t,x)$ for  $t_n \le t \le t_{n+1} $ and $x \in \Omega$; this similarly holds for the other variables. We then set the errors:
\begin{alignat*}{1}
\eu^{n} \ &= \ \bld{\mathcal{U}}^{n} \ - \ \bu^{n}, \quad \ef^{n} \ = \ \sigma_{\mathcal{F}}^{n} \ - \ \sigma_{f}^{n} \\
\es^{n} \ &= \ \sigma_{\mathcal{S}}^{n} \ - \ \sigma_{s}^{n}, \quad \eeta^{n} \ = \ \bld{\mathcal{E}}^{n} \ - \ \bfeta^{n}, \quad \edeta^{n} \ = \ \dot{\bld{\mathcal{E}}}^{n} \ - \ \dot{\bfeta}^{n}
\end{alignat*}
We also define the following quantities which will be useful to describe our error estimates:
\begin{alignat*}{2}
\mathbb{E}^{n} \ &:= \ \frac{\rho_{f}}{2} \|\eu^{n}(t_{n})\|^{2}_{L^{2}(\Omega_{f})}  + \frac{\rho_{s}}{2}\|\edeta^{n}(t_{n})\|^{2}_{L^{2}(\Omega_{s})}+ \frac{1}{2} \|\eeta^{n}(t_{n})\|_S^2, \\
 \mathbb{T}^{n} \ &:= \ 2 \mu \int_{t_{n-1}}^{t_{n}} \| {\eps}(\eu^{n}(s)) \|^{2}_{L^{2}(\Omega_{f})}ds+  \frac{\lambda}{4}  \int_{t_{n-1}}^{t_{n}} \|\edeta^{n} - \teu^{n-1}\|^{2}_{L^{2}(\Sigma)}  \\
\mathbb{S}^{n} \ &:= \ \frac{1}{2\lambda}\int_{t_{n-1}}^{t_{n}} \|\ef^{n}(s)\bn\|^{2}_{L^{2}(\Sigma)}ds + \frac{\lambda}{2}\int_{t_{n-1}}^{t_{n}}\|\eu^{n}(s)\|^{2}_{L^{2}(\Sigma)}ds, \qquad \text{ for  } n \ge 1, \\
\mathbb{S}^{0} \ &:= \ \frac{\Delta t}{2\lambda} \|\ef^{0}(t_0)\bn\|^{2}_{L^{2}(\Sigma)} + \frac{\lambda \Delta t }{2}\|\eu^{0}(t_0)\|^{2}_{L^{2}(\Sigma)}.
\end{alignat*}
We note that  $\mathbb{E}^{0}=0$ and $\mathbb{S}^{0}=0$. 

For the proof of the error estimates, we will make use of the following lemma. We define
\begin{alignat}{1}
g_3^{n+1}&:=\lambda(\bld{\mathcal{U}}^{n+1}- \tilde{\bld{\mathcal{U}}}^{n}), \label{g3} \\
g_2^{n+1}&:= (\sigma_{\mathcal{F}}^{n+1}\bn \ - \ \tilde{\sigma}_{\mathcal{F}}^{n} \bn). \label{g2}
\end{alignat}
\begin{lemma}\label{lem:g-bound}
For $\bld{\mathcal{U}}^{n}$ and $\sigma_{\mathcal{F}}^{n}$ defined above, we have for $n \ge 1$
\begin{alignat}{1}
\int_{t_{n}}^{t_{n+1}} \|g_3^{n+1}(s)\|^{2}_{L^{2}(\Sigma)}ds \le &  C \lambda^2 (\Delta t)^2 \int_{t_{n-1} }^{ t_{n+1}}  \| \partial_t \bld{\mathcal{U}}(s)\|_{L^2(\Sigma)}^2 ds, \label{lem123}\\
\int_{t_{n}}^{t_{n+1}} \|g_2^{n+1}(s)\|^{2}_{L^{2}(\Sigma)}ds \le &  C(\Delta t)^2 \int_{t_{n-1} }^{ t_{n+1}}  \| \partial_t \sigma_{\mathcal{F}}(s)\bn\|_{L^2(\Sigma)}^2 ds. \label{lem1230}
\end{alignat}
For $n=0$ we have 
\begin{alignat}{1}
 \int_{t_0}^{t_1} \|g_3^{1}(s)\|^{2}_{L^{2}(\Sigma)}ds \le &  C \lambda^2 (\Delta t)^2 \int_{t_0 }^{ t_1}  \| \partial_t \bld{\mathcal{U}}(s)\|_{L^2(\Sigma)}^2 ds, \label{lem123-0} \\
\int_{t_0}^{t_1} \|g_2^{1}(s)\|^{2}_{L^{2}(\Sigma)}ds \le &  C(\Delta t)^2 \int_{t_0 }^{ t_1}  \| \partial_t \sigma_{\mathcal{F}}(s)\bn\|_{L^2(\Sigma)}^2 ds.  \label{lem1230-0}
\end{alignat}

\end{lemma}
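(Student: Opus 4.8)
The plan is to recognize each quantity $g_3^{n+1}$ and $g_2^{n+1}$ as the difference between a quantity evaluated on $[t_n,t_{n+1}]$ and its time-average over the previous interval $[t_{n-1},t_n]$, and then bound that difference by a time derivative using the fundamental theorem of calculus plus Cauchy--Schwarz. Since $\tilde{\bld{\mathcal{U}}}^n(x) = \frac{1}{\Delta t}\int_{t_{n-1}}^{t_n}\bld{\mathcal{U}}(x,s)\,ds$, for $t \in [t_n,t_{n+1}]$ we can write
\begin{equation*}
\bld{\mathcal{U}}(x,t) - \tilde{\bld{\mathcal{U}}}^n(x) = \frac{1}{\Delta t}\int_{t_{n-1}}^{t_n}\bigl(\bld{\mathcal{U}}(x,t) - \bld{\mathcal{U}}(x,s)\bigr)\,ds = \frac{1}{\Delta t}\int_{t_{n-1}}^{t_n}\int_{s}^{t}\partial_\tau\bld{\mathcal{U}}(x,\tau)\,d\tau\,ds.
\end{equation*}
Since $s \in [t_{n-1},t_n]$ and $t \in [t_n,t_{n+1}]$, the inner integration variable $\tau$ ranges within $[t_{n-1},t_{n+1}]$, an interval of length $2\Delta t$; so $\left|\int_s^t \partial_\tau\bld{\mathcal{U}}(x,\tau)\,d\tau\right| \le \int_{t_{n-1}}^{t_{n+1}}|\partial_\tau\bld{\mathcal{U}}(x,\tau)|\,d\tau$, and by Cauchy--Schwarz this is $\le (2\Delta t)^{1/2}\bigl(\int_{t_{n-1}}^{t_{n+1}}|\partial_\tau\bld{\mathcal{U}}(x,\tau)|^2\,d\tau\bigr)^{1/2}$.

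Putting this together, $|\bld{\mathcal{U}}(x,t) - \tilde{\bld{\mathcal{U}}}^n(x)|^2 \le \frac{1}{\Delta t}\cdot 2\Delta t \int_{t_{n-1}}^{t_{n+1}}|\partial_\tau\bld{\mathcal{U}}(x,\tau)|^2\,d\tau = 2\int_{t_{n-1}}^{t_{n+1}}|\partial_\tau\bld{\mathcal{U}}(x,\tau)|^2\,d\tau$ (using that the average over $s$ of the squared inner term is bounded by the squared bound). Next I would integrate over $x \in \Sigma$ and then over $t \in [t_n,t_{n+1}]$: the $t$-integration contributes another factor $\Delta t$, yielding $\int_{t_n}^{t_{n+1}}\|\bld{\mathcal{U}}(t) - \tilde{\bld{\mathcal{U}}}^n\|_{L^2(\Sigma)}^2\,dt \le 2\Delta t\int_{t_n}^{t_{n+1}}\int_{t_{n-1}}^{t_{n+1}}\|\partial_\tau\bld{\mathcal{U}}(\tau)\|_{L^2(\Sigma)}^2\,d\tau\,dt = 2(\Delta t)^2\int_{t_{n-1}}^{t_{n+1}}\|\partial_\tau\bld{\mathcal{U}}(\tau)\|_{L^2(\Sigma)}^2\,d\tau$. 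Multiplying by $\lambda^2$ gives \eqref{lem123} with $C = 2$. The bound \eqref{lem1230} for $g_2^{n+1}$ is identical with $\bld{\mathcal{U}}$ replaced by $\sigma_{\mathcal{F}}\bn$ and no $\lambda$ factor.

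For the case $n = 0$, the definition changes: $\tilde{\bld{\mathcal{U}}}^0(x) = \bld{\mathcal{U}}(x,t_0)$, so $g_3^1(x,t) = \lambda(\bld{\mathcal{U}}(x,t) - \bld{\mathcal{U}}(x,t_0)) = \lambda\int_{t_0}^t\partial_\tau\bld{\mathcal{U}}(x,\tau)\,d\tau$ for $t \in [t_0,t_1]$. By Cauchy--Schwarz, $|g_3^1(x,t)|^2 \le \lambda^2(t - t_0)\int_{t_0}^{t_1}|\partial_\tau\bld{\mathcal{U}}(x,\tau)|^2\,d\tau \le \lambda^2\Delta t\int_{t_0}^{t_1}|\partial_\tau\bld{\mathcal{U}}(x,\tau)|^2\,d\tau$; integrating over $\Sigma$ and $t \in [t_0,t_1]$ gives another $\Delta t$, hence \eqref{lem123-0} with $C = 1$. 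Likewise for \eqref{lem1230-0}. There is no real obstacle here — the argument is a routine telescoping-plus-Cauchy--Schwarz estimate; the only point requiring a moment's care is the doubled time window $[t_{n-1},t_{n+1}]$ arising because $g_3^{n+1}$ on $[t_n,t_{n+1}]$ is compared against an average over the \emph{previous} interval, which is why the right-hand sides integrate over two intervals rather than one, and tracking the powers of $\Delta t$ (two factors, one from averaging/Cauchy--Schwarz, one from the outer $t$-integration) to land exactly on $(\Delta t)^2$.
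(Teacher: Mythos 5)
Your argument is correct and is essentially the paper's own proof: write $\bld{\mathcal{U}}^{n+1}(t)-\tilde{\bld{\mathcal{U}}}^{n}$ as $\frac{1}{\Delta t}\int_{t_{n-1}}^{t_n}\int_s^t \partial_\tau\bld{\mathcal{U}}\,d\tau\,ds$, apply Cauchy--Schwarz over the doubled window $[t_{n-1},t_{n+1}]$, and collect one power of $\Delta t$ there and one from the outer $t$-integration (the paper likewise proves only \eqref{lem123} and declares the remaining three estimates analogous). The only blemish is the intermediate display where $\frac{1}{\Delta t}\cdot 2\Delta t$ is simplified to $2$: the correct pointwise-in-$t$ bound is $2\Delta t\int_{t_{n-1}}^{t_{n+1}}|\partial_\tau\bld{\mathcal{U}}(x,\tau)|^2\,d\tau$, which is what your subsequent display actually uses to arrive at the (correct) factor $2(\Delta t)^2$.
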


\begin{proof}
We only prove \eqref{lem123}  as the proof of the other estimates are similar. We have
\begin{alignat*}{1}
\bld{\mathcal{U}}^{n+1}(s)-\tilde{\bld{\mathcal{U}}}^n(s)=& \frac{1}{\Delta t} \int_{t_{n-1}}^{t_n} (\bld{\mathcal{U}}(s)- \bld{\mathcal{U}}(r)) \, dr= \frac{1}{\Delta t} \int_{t_{n-1}}^{t_n} \int_{r}^s \partial_t \bld{\mathcal{U}}(\theta)  \,d\theta \, dr.
\end{alignat*}
Hence, 
\begin{alignat*}{1}
\int_{t_{n}}^{t_{n+1}} \|\lambda(\bld{\mathcal{U}}^{n+1}(s)- \tilde{\bld{\mathcal{U}}}^{n}(s))\|^{2}_{L^{2}(\Sigma)}ds=& \lambda^2 \int_{t_{n}}^{t_{n+1}} \int_{\Sigma}  (\frac{1}{\Delta t} \int_{t_{n-1}}^{t_n} \int_{r}^s \partial_t \bld{\mathcal{U}}(\theta)  \,d\theta \, dr)^2 \,ds  \\
\le & 2\lambda^2  \int_{t_{n}}^{t_{n+1}} \int_{\Sigma}  \int_{t_{n-1}}^{t_n} \int_{r}^s (\partial_t \bld{\mathcal{U}}(\theta))^2  \,d\theta \, dr \,\,ds  \\ 
\le & C \lambda^2 (\Delta t)^2 \int_{ t_{n-1}}^{ t_{n+1}} \|\partial_t \bld{\mathcal{U}}(\theta)\|_{L^2(\Sigma)}^2.
\end{alignat*}
\end{proof}

The error estimates are given in the following theorem. Note that we will implicitly assume the regularity mentioned in Remark \ref{remark}. In addition, we assume that 
$\partial_t \bld{\mathcal{U}} \in L^2(0,T;L^{2}(\Sigma))$ and $\partial_t \sigma_{\mathcal{F}}\bn \in L^2(0,T;L^{2}(\Sigma))$.
\begin{theorem}
Let $\bld{\mathcal{U}}, \mathcal{P}, \sigma_{\mathcal{F}}, \bld{\mathcal{E}}, \sigma_{\mathcal{S}}$ solve \eqref{Fluid}, \eqref{Structure} and \eqref{Exactinterface}. Furthermore, let $\bu^{n+1},  \sigma_{f}^{n+1}, p^{n+1}$ solve \eqref{fluid} and  $\bfeta^{n+1}, \dbfeta^{n+1}$ solve \eqref{solid}. If $T=N\Delta t $  with $N \ge 1$, the following estimate holds: 
\begin{alignat*}{1}
\mathbb{E}^{N} \ + \ \sum_{n=1}^{N}\mathbb{T}^{n} \ + \mathbb{S}^{N} \le  C \Delta t T \bigg(\lambda \| \partial_t \bld{\mathcal{U}}\|_{L^2(0,T;L^{2}(\Sigma))}^2 +\frac{1}{\lambda}\| \partial_t \sigma_{\mathcal{F}}\bn\|_{L^2(0,T;L^{2}(\Sigma))}^2 \bigg).
\end{alignat*}
\end{theorem}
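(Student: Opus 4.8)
The plan is to transcribe, almost line for line, the stability argument of Section~\ref{sec:stability} to the error equations, and to dominate the additional consistency terms that the splitting produces by Lemma~\ref{lem:g-bound} together with a discrete Gronwall inequality whose amplification factor stays bounded as $N\to\infty$; the latter is the device, due to Baker, that replaces exponential growth in $T$ by the factor $\sqrt{T}$.

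First I would set up the error equations. Writing \eqref{Fluid} and \eqref{Structure} slab by slab and subtracting \eqref{fluid} and \eqref{solid}, the errors $\eu^{n+1},\ef^{n+1}$ solve the homogeneous Stokes system on $\Omega_f$, the errors $\eeta^{n+1},\edeta^{n+1},\es^{n+1}$ solve the homogeneous elasticity system on $\Omega_s$, and the initial data match: $\eu^{n+1}(\cdot,t_n)=\eu^{n}(\cdot,t_n)$, $\edeta^{n+1}(\cdot,t_n)=\edeta^{n}(\cdot,t_n)$, $\eeta^{n+1}(\cdot,t_n)=\eeta^{n}(\cdot,t_n)$. Using the exact interface identities \eqref{Exactinterface1}--\eqref{Exactinterface2}, the definitions \eqref{g2}--\eqref{g3}, and the splittings $\tbu^n=\tilde{\bld{\mathcal{U}}}^n-\teu^n$, $\tsig^n\bn=\tilde{\sigma}_{\mathcal{F}}^n\bn-\tef^n\bn$, the Robin conditions \eqref{solid6} and \eqref{fluid6} translate into error interface conditions on $\Sigma$,
\begin{align*}
\lambda\eu^{n+1}+\ef^{n+1}\bn&=\lambda\edeta^{n+1}+\tef^n\bn+g_2^{n+1},\\
\lambda\edeta^{n+1}+\es^{n+1}\bn_{s}&=\lambda\teu^{n}-\tef^n\bn+g_3^{n+1}-g_2^{n+1},
\end{align*}
from which $\ef^{n+1}\bn+\es^{n+1}\bn_{s}=\lambda(\teu^{n}-\eu^{n+1})+g_3^{n+1}$ and $\lambda(\eu^{n+1}-\edeta^{n+1})=\tef^n\bn-\ef^{n+1}\bn+g_2^{n+1}$. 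These are the exact analogues of the two relations exploited in the stability proof, now carrying the data $g_2^{n+1},g_3^{n+1}$.

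Next I would test the two error momentum equations with $\eu^{n+1}$ and $\edeta^{n+1}$, integrate over the domains and add, reaching the counterpart of \eqref{r43} with right-hand side $J_e=\int_\Sigma\ef^{n+1}\bn\cdot\eu^{n+1}+\int_\Sigma\es^{n+1}\bn_{s}\cdot\edeta^{n+1}$. Splitting $J_e$ into $\int_\Sigma\ef^{n+1}\bn\cdot(\eu^{n+1}-\edeta^{n+1})+\int_\Sigma(\ef^{n+1}\bn+\es^{n+1}\bn_{s})\cdot\edeta^{n+1}$, inserting the two interface relations, and applying the identity \eqref{eq131} exactly as in the stability proof (the cross term $\tfrac\lambda2\|\edeta^{n+1}-\eu^{n+1}\|^2_{L^2(\Sigma)}$ again cancels against $\tfrac1{2\lambda}\|\ef^{n+1}\bn-\tef^n\bn-g_2^{n+1}\|^2_{L^2(\Sigma)}$, since $\lambda(\eu^{n+1}-\edeta^{n+1})=\tef^n\bn-\ef^{n+1}\bn+g_2^{n+1}$), I expect
\begin{align*}
J_e=\ &\tfrac1{2\lambda}\|\tef^n\bn\|^2_{L^2(\Sigma)}-\tfrac1{2\lambda}\|\ef^{n+1}\bn\|^2_{L^2(\Sigma)}+\tfrac\lambda2\|\teu^n\|^2_{L^2(\Sigma)}-\tfrac\lambda2\|\eu^{n+1}\|^2_{L^2(\Sigma)}-\tfrac\lambda2\|\edeta^{n+1}-\teu^n\|^2_{L^2(\Sigma)}\\
&+\underbrace{\tfrac1\lambda\int_\Sigma\tef^n\bn\cdot g_2^{n+1}+\tfrac1{2\lambda}\|g_2^{n+1}\|^2_{L^2(\Sigma)}+\int_\Sigma g_3^{n+1}\cdot\edeta^{n+1}}_{=:\,\mathcal{R}^{n+1}}.
\end{align*}
Integrating over $[t_n,t_{n+1}]$ and using \eqref{eq111} to bound $\tfrac{\Delta t}{2\lambda}\|\tef^n\bn\|^2_{L^2(\Sigma)}+\tfrac{\lambda\Delta t}{2}\|\teu^n\|^2_{L^2(\Sigma)}\le\mathbb{S}^{n}$ then yields the slab inequality
\begin{equation*}
\mathbb{E}^{n+1}+\mathbb{T}^{n+1}+\mathbb{S}^{n+1}+\tfrac\lambda4\!\int_{t_n}^{t_{n+1}}\!\|\edeta^{n+1}-\teu^n\|^2_{L^2(\Sigma)}\le\mathbb{E}^{n}+\mathbb{S}^{n}+\!\int_{t_n}^{t_{n+1}}\!\mathcal{R}^{n+1}(s)\,ds,
\end{equation*}
where the spare $\tfrac\lambda4\int\|\edeta^{n+1}-\teu^n\|^2$ on the left is available precisely because $\mathbb{T}^{n}$ was defined with coefficient $\tfrac\lambda4$ (rather than $\tfrac\lambda2$ as in $\mathbf{T}^n$).

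Finally I would control $\int\mathcal{R}^{n+1}$. Writing $\edeta^{n+1}=(\edeta^{n+1}-\teu^n)+\teu^n$, the term $\int g_3^{n+1}\cdot\edeta^{n+1}$ splits into a part absorbed into the spare $\tfrac\lambda4\int\|\edeta^{n+1}-\teu^n\|^2$ and a part controlled, by Young's inequality and \eqref{eq111}, by $\tfrac CN\,\mathbb{S}^{n}$ plus an integral of $\|g_3^{n+1}\|^2$; likewise $\tfrac1\lambda\int\tef^n\bn\cdot g_2^{n+1}$ is controlled by $\tfrac CN\,\mathbb{S}^{n}$ plus an integral of $\|g_2^{n+1}\|^2$; the Young parameters are taken of size $1/N$ precisely so that, since $N(\Delta t)^2=T\Delta t$, Lemma~\ref{lem:g-bound} turns the residual $g_2,g_3$ integrals into a quantity $c_n$ with $\sum_{n=0}^{N-1}c_n\le C\,T\Delta t\bigl(\lambda\|\partial_t\bld{\mathcal{U}}\|^2_{L^2(0,T;L^2(\Sigma))}+\tfrac1\lambda\|\partial_t\sigma_{\mathcal{F}}\bn\|^2_{L^2(0,T;L^2(\Sigma))}\bigr)$. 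The slab inequality then reads $\mathbb{E}^{n+1}+\mathbb{T}^{n+1}+\mathbb{S}^{n+1}\le\mathbb{E}^{n}+(1+\tfrac CN)\mathbb{S}^{n}+c_n$; applying the discrete Gronwall inequality to $G^n:=\mathbb{E}^{n}+\mathbb{S}^{n}$ (with $G^0=0$, since $\mathbb{E}^0=\mathbb{S}^0=0$), the amplification accumulated over $N$ steps is $(1+\tfrac CN)^N\le e^{C}$, independent of $T$ and $\Delta t$, and re-summing the telescoped inequality to recover $\sum_n\mathbb{T}^n$ gives $\mathbb{E}^N+\sum_{n=1}^N\mathbb{T}^n+\mathbb{S}^N\le C\sum_{n=0}^{N-1}c_n$, which is the claim. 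The main obstacle is exactly this last bookkeeping: one has to split $\int g_3^{n+1}\cdot\edeta^{n+1}$ in the right way and, crucially, choose the Young parameters of size $\sim1/N$, so that the feedback into $\mathbb{S}^{n}$ carries coefficient $O(1/N)$ (giving a $T$-independent Gronwall constant, hence the $\sqrt{T\Delta t}$ rate) while the leftover consistency integrals still collapse to $O((\Delta t)^2)$ per slab and hence $O(T\Delta t)$ after summation; everything else is a faithful copy of the stability proof.
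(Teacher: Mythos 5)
Your proposal is correct and follows essentially the same route as the paper: the same error interface relations carrying $g_2^{n+1}$ and $g_3^{n+1}$, the same energy testing and decomposition of the interface term via \eqref{eq131}, the same remainder $\tfrac1\lambda\int_\Sigma\tef^n\bn\cdot g_2^{n+1}+\tfrac1{2\lambda}\|g_2^{n+1}\|^2_{L^2(\Sigma)}+\int_\Sigma g_3^{n+1}\cdot\edeta^{n+1}$, the same use of the spare $\tfrac\lambda4$ and of Young parameters $\delta\sim 1/N$, and the same invocation of Lemma \ref{lem:g-bound} with $N(\Delta t)^2=T\Delta t$. The only divergence is the final absorption: the paper bounds $\delta\,\mathbb{S}^n\le\delta\max_{1\le m\le N}\mathbb{S}^m$, sums, and takes the maximum over the final index to absorb $\tfrac12\max_m\mathbb{S}^m$ into the left-hand side (this is the Baker device), whereas you run a discrete Gronwall argument on $\mathbb{E}^n+\mathbb{S}^n$ with amplification $(1+C/N)^N\le e^{C}$; both yield a $T$-independent constant and hence the stated $O(T\Delta t)$ bound, the paper's variant merely avoiding the extra factor $e^{C}$.
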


\begin{proof}
 Using \eqref{solid6}, \eqref{fluid6} and \eqref{Exactinterface} we see that
\begin{align*}
\es^{n+1} \bn_{s} \ + \ \lambda \edeta^{n+1} \ &= \ \lambda \teu^{n} \ - \ \tef^{n} \bn \ + \ g_1^{n+1}, \\
\ef^{n+1} \bn \ + \ \lambda \eu^{n+1} \ &= \ \lambda \edeta^{n+1} \ + \ \tef^{n} \bn \ + \ g_2^{n+1},
\end{align*}
where $g_2^{n+1}$ is given in \eqref{g2} and
\begin{alignat*}{1}
g_1^{n+1} &:=\lambda ( \bld{\mathcal{U}}^{n+1}- \tilde{\bld{\mathcal{U}}}^{n}) \ + \ ( \tilde{\sigma}_{\mathcal{F}}^{n} \bn \ - \ \sigma_{\mathcal{F}}^{n+1}\bn). 
\end{alignat*}
By adding the two equations we get
\begin{subequations}\label{errorInterface} 
\begin{alignat}{1}
\es^{n+1} \bn_{s}+\ef^{n+1} \bn&= \ \lambda (\teu^{n}-\eu^{n+1} )+g_3^{n+1} \label{errorInterface1},
 \end{alignat}
 where $g_3^{n+1}$ is given in \eqref{g3}.  Also, we re-arrange the second equation and write 
\begin{alignat}{1}
 \eu^{n+1}-\edeta^{n+1} &= \frac{1}{\lambda} ( \tef^{n} \bn-\ef^{n+1} \bn) + \frac{1}{\lambda} g_{2}^{n+1}. \label{errorInterface2}
\end{alignat}
\end{subequations}

We may therefore proceed with the same initial steps from the stability analysis. This yields 
\begin{align*}
\frac{\rho_{f}}{2}\partial_{t} \|\eu^{n+1}\|^{2}_{L^{2}(\Omega_{f})} \ + \ 2 \mu \| {\eps}(\eu^{n+1}) \|^{2}_{L^{2}(\Omega_{f})} \ - \ \int_{\Sigma}\ef^{n+1}\bn \cdot \eu^{n+1} \ &= \ 0, \\
\frac{\rho_{s}}{2} \partial_{t}\|\edeta^{n+1}\|^{2}_{L^{2}(\Omega_{s})} \ + \ \frac{1}{2}\partial_{t}\|\eeta^{n+1}\|_S^2  \ - \ \int_{\Sigma}\es^{n+1} \bn_{s} \cdot \edeta^{n+1} \ &= \ 0.
\end{align*}
If we set 
\begin{equation*}
I^{n+1} := \frac{\rho_{f}}{2}\partial_{t} \|\eu^{n+1}\|^{2}_{L^{2}(\Omega_{f})} + 2 \mu \| {\eps}(\eu^{n+1}) \|^{2}_{L^{2}(\Omega_{f})} + \frac{\rho_{s}}{2} \partial_{t}\|\edeta^{n+1}\|^{2}_{L^{2}(\Omega_{s})} +\frac{1}{2}\partial_{t} \| \eeta^{n+1}\|_S^2,
\end{equation*}
we have that
\begin{alignat*}{1}
I^{n+1} =& \ \int_{\Sigma}\ef^{n+1}\bn \cdot \eu^{n+1} \ + \ \int_{\Sigma}\es^{n+1} \bn_{s} \cdot \edeta^{n+1} \\
=& \ \int_{\Sigma}\ef^{n+1} \bn \cdot (\eu^{n+1}- \edeta^{n+1}) \ + \ \int_{\Sigma}(\es^{n+1} \bn_{s}+\ef^{n+1} \bn) \cdot \edeta^{n+1} \\
=&  \ \frac{1}{\lambda} \int_{\Sigma}\ef^{n+1} \bn \cdot  ( \tef^{n} \bn-\ef^{n+1} \bn)+  \lambda\int_{\Sigma}(\teu^{n}-\eu^{n+1}) \cdot \edeta^{n+1} \\
&+  \ \frac{1}{\lambda} \int_{\Sigma}\ef^{n+1} \bn \cdot   g_2^{n+1}+ \int_{\Sigma}g_3^{n+1}\cdot \edeta^{n+1}.
\end{alignat*}
In the last equality we used \eqref{errorInterface}.  Also, the following holds after using \eqref{errorInterface}
\begin{alignat*}{1}
\|\tef^n\bn-\ef^{n+1} \bn\|_{L^{2}(\Sigma)}^2= & \|\lambda ( \eu^{n+1}-\edeta^{n+1})-g_2^{n+1} \|_{L^{2}(\Sigma)}^2\\
=& \lambda^2 \|\eu^{n+1}-\edeta^{n+1}\|_{L^{2}(\Sigma)}^2+ \|g_2^{n+1} \|_{L^{2}(\Sigma)}^2-2 \lambda \int_{\Sigma} g_2^{n+1} \cdot (\eu^{n+1}-\edeta^{n+1}). 
\end{alignat*}
If we use the above equations and \eqref{eq131} we obtain 
\begin{alignat*}{1}
I^{n+1} = &A + \frac{1}{\lambda} \int_{\Sigma}\ef^{n+1} \bn \cdot   g_2^{n+1}  +\int_{\Sigma}g_3^{n+1}\cdot \edeta^{n+1}
 - \frac{1}{2\lambda}\|g_{2}^{n+1}\|^{2}_{L^{2}(\Sigma)} + \int_{\Sigma} g_2^{n+1} \cdot (\eu^{n+1}-\edeta^{n+1}),
\end{alignat*}
where
\begin{equation*}
A := \frac{1}{2\lambda}\left(\|\tef^{n}\bn\|^{2}_{L^{2}(\Sigma)} - \|\ef^{n+1}\bn\|^{2}_{L^{2}(\Sigma)}\right) + \frac{\lambda}{2}\left(\|\teu^{n}\|^{2}_{L^{2}(\Sigma)} - \|\eu^{n+1}\|^{2}_{L^{2}(\Sigma)} - \|\edeta^{n+1} - \teu^{n}\|^{2}_{L^{2}(\Sigma)}\right).
\end{equation*}

Again using \eqref{errorInterface2} and applying Cauchy-Schwarz and Young's inequalities, we have 
\begin{alignat*}{1}
I^{n+1} = A&+\int_{\Sigma}g_3^{n+1}\cdot (\edeta^{n+1}-\teu^n)+\int_{\Sigma}g_3^{n+1}\cdot \teu^n+ \frac{1}{\lambda} \int_{\Sigma}\tef^{n} \bn \cdot   g_2^{n+1}   +\frac{1}{2\lambda}\|g_{2}^{n+1}\|^{2}_{L^{2}(\Sigma)} \\
\leq A &+ \frac{\lambda}{4}\|\edeta^{n+1} - \teu^{n}\|^{2}_{L^{2}(\Sigma)} + \frac{1}{\lambda}(1 + \frac{1}{2\delta})\|g_3^{n+1}\|_{L^{2}(\Sigma)}^{2} + \frac{1}{\lambda}(\frac{1}{2} + \frac{1}{2\delta})\|g_2^{n+1}\|_{L^{2}(\Sigma)}^{2}\\
& + \frac{\delta}{2\lambda}\|\tef^{n}\bn\|_{L^{2}(\Sigma)}^{2} + \frac{\lambda \delta}{2}\|\teu^n\|_{L^{2}(\Sigma)}^{2},
\end{alignat*}
where $\delta>0$.

Taking the integral on $[t_{n},t_{n+1}]$ and applying \eqref{eq111} we  have:
\begin{alignat*}{1}
\mathbb{E}^{n+1}+ \mathbb{T}^{n+1}  + \mathbb{S}^{n+1}  \le \mathbb{E}^{n}+  (1+\delta) \mathbb{S}^{n} +G^{n+1},
\end{alignat*}
where
\begin{alignat*}{2}
G^{n+1} \ &:= \  \frac{1}{\lambda}(1 + \frac{1}{2\delta})\int_{t_n}^{t_{n+1}}\|g_3^{n+1}(s)\|_{L^{2}(\Sigma)}^{2}ds + \frac{1}{\lambda}(\frac{1}{2} + \frac{1}{2\delta})\int_{t_n}^{t_{n+1}}\|g_2^{n+1}(s)\|_{L^{2}(\Sigma)}^{2}ds.
\end{alignat*}
We then clearly have 
\begin{equation*}
\mathbb{E}^{n+1}  +  \mathbb{T}^{n+1}  + \mathbb{S}^{n+1}  \leq  \mathbb{E}^{n}  +  \mathbb{S}^{n}  +\delta \max_{1 \leq m \leq N}\mathbb{S}^{m}  + G^{n+1}.
\end{equation*}
If we sum from $0$ to $M-1$ with $1\le M \le N$ and set $\delta = \frac{\Delta t}{2 T}$ we obtain
\begin{equation*}
\mathbb{E}^{M} \ + \ \sum_{n=0}^{M-1}\mathbb{T}^{n+1} \ + \ \mathbb{S}^{M} \ \leq   \frac{1}{2} \max_{1\leq m \leq N}\mathbb{S}^{m} + \sum_{n=0}^{M-1}G^{n+1}.
\end{equation*}
Here we used that $\mathbb{E}^{0}=0$ and $\mathbb{S}^{0}=0$. Since this holds for any $1 \le M \le N$ we have 
\begin{equation*}
\frac{1}{2} \max_{1\leq m \leq N}\mathbb{S}^{m} \le  \sum_{n=0}^{N-1}G^{n+1}.
\end{equation*}
Thus, we have 
\begin{equation}\label{e861}
\mathbb{E}^{N} \ + \ \sum_{n=1}^{N}\mathbb{T}^{n} \ + \ \mathbb{S}^{N} \ \leq  2 \sum_{n=0}^{N-1}G^{n+1}.
\end{equation}

Using  Lemma \ref{lem:g-bound} and  that $\delta = \frac{\Delta t}{2 T}$  we immediately have 
\begin{alignat*}{1}
\sum_{n=0}^{N-1} G^{n+1} & \leq C \Delta t T \bigg( \lambda \| \partial_t \bld{\UU}\|_{L^{2}(0,T; L^2(\Sigma))}^2 + \frac{1}{\lambda} \| \partial_t \sigma_{\mathcal{F}}\bn\|_{L^{2}(0,T; L^2(\Sigma))}^2 \bigg).
\end{alignat*}
Here we used that $T \ge \Delta t$. Combing this with \eqref{e861} completes the proof. 
\end{proof}


\section{Conclusion}
In this paper we analyzed a Robin-Robin splitting scheme for an FSI problem. We showed that the error is bounded by $\sqrt{T\Delta t}$.  Since the splitting does not discretize in space this gives several possibilities for spatial discretizations. In a forthcoming paper, we will analyze a fully discrete scheme and present numerical experiments.

\bibliographystyle{abbrv}
\bibliography{references}

\end{document}